\numberwithin{equation}{section}
\renewcommand{\geq}{\geqslant}
\renewcommand{\leq}{\leqslant}
\newcommand{\Osh}{{\mathcal O}}                        
\newcommand{\K}{\mathrm{K}}                            
\newcommand{\Br}{\operatorname{Br}}
\newcommand{\CC}{\mathbb{C}} 
\newcommand{\PP}{\mathbb{P}} 
\newcommand{\QQ}{\mathbb{Q}} 
\newcommand{\ZZ}{\mathbb{Z}} 
\newcommand{\Div}{\mathrm{Div}}
\newtheorem{theorem}{Theorem}[section]
\newtheorem{proposition}[theorem]{Proposition}
\theoremstyle{definition}
\newtheorem{defn}[theorem]{Definition}
\newtheorem{remark}[theorem]{Remark}
\newtheorem{problem}[theorem]{Problem}
\newtheorem{example}[theorem]{Example}
\begin{document}
\title[Contraction for orbifold pairs in dimension $2$]{Numerical contraction for orbifold surfaces}

\author{Nathan Grieve}
\address{Department of Mathematics \& Computer Science,
Royal Military College of Canada, P.O. Box 17000,
Station Forces, Kingston, ON, K7K 7B4, Canada
} 
\address{
School of Mathematics and Statistics, 4302 Herzberg Laboratories, Carleton University, 1125 Colonel By Drive, Ottawa, ON, K1S 5B6, Canada
}
\address{D\'{e}partement de math\'{e}matiques, Universit\'{e} du Qu\'{e}bec \'a Montr\'{e}al, Local PK-5151, 201 Avenue du Pr\'{e}sident-Kennedy, Montr\'{e}al, QC, H2X 3Y7, Canada}

\email{nathan.m.grieve@gmail.com}%

\begin{abstract} 
We study singularities and Artin's contraction theorem for orbifold surfaces.  Our main result has a consequence which is in the direction of the birational Minimal Model Program for b-terminal orbifold surfaces.  For example, we ascertain the nature of extremal contractions for such $b$-terminal pairs.
\end{abstract}
\thanks{Mathematics Subject Classification (2010): 14E30, 14J10. 
}
\thanks{
I hold grants DGECR-2021-00218 and RGPIN-2021-03821 from NSERC (The Natural Sciences and Engineering Research Council of Canada).
}
\thanks{
Date: \today. 
}

\maketitle

\section{Introduction}
A fundamental aspect to the birational geometry of a given $\QQ$-factorial complex projective surface $S$, is the contraction of exceptional Cartier divisors of the first kind.  When $S$ is nonsingular, this is achieved via Castelnuovo's theorem 
\cite[Theorem 5.7]{Hart}.  

More generally, for a given $\QQ$-factorial logarithmic surface $(S,\Delta)$, 
it is of interest to understand the extent to which analogs of Castelnuovo's theorem remain true for those irreducible, reduced and effective Cartier divisors 
$$E \subseteq S$$ 
which have the two properties that: 
\begin{itemize}
\item[(i)]{
$E^2 < 0$; and
}
\item[(ii)]{
$E \cdot (\K_S + \Delta) < 0$.
}
\end{itemize}

Here, we treat the case that $\Delta$ is an \emph{orbifold divisor}
$$
\Delta = \sum_i (1 - 1/m_i) C_i \text{, }
$$
for $m_i$ positive integers, and $C_i$ distinct irreducible curves in a given Gorenstein $\QQ$-factorial complex projective surface $S$.  We say that the pair $(S,\Delta)$ is an \emph{orbifold pair}; we also say that $(S,\Delta)$ is an \emph{orbifold surface} (see also Definition \ref{Orbifold:Surface:Defn}; compare, for example, with \cite[Section 1.2.1]{Campana:2004} or \cite[Section 4]{Ghigi:Kollar:2007}).  

As some motivation, we mention the work \cite{Chan:Ingalls:2005}, which studies log pairs $(S,\Delta)$ with $S$ a nonsingular complex projective surface and $\Delta$ the \emph{ramification divisor} of a given Brauer class $\alpha \in \operatorname{Br}(\CC(S))$.  
For example, in \cite[Theorem 3.10]{Chan:Ingalls:2005}, it is shown that if the \emph{ramification data} of the Brauer class $\alpha$ is \emph{terminal} on $S$, then those irreducible $\K_S + \Delta$-negative curves 
$$E \subseteq S$$ 
with 
$$E^2 < 0$$ 
are in fact minus one curves.  

A key point to the proof of that result, is the work of Artin and Mumford \cite{Artin:Mumford:1971} (see also \cite[Theorem 3.1]{Chan:Ingalls:2005}).  Recall, that this circle of ideas has origins in the work of Clemens and Griffiths \cite{Clemens:Griffiths:1972}, in particular, the problem of constructing unirational varieties which are not rational, and the algebraic conic bundle approach which was developed in \cite{Artin:Mumford:1971}.

As some further motivation for the study of orbifold logarithmic pairs, especially from the point of view of fibrations and orbifold stacks, we refer to \cite{Campana:2004}, \cite{Matsuki:Olsson:2005}, \cite{Ghigi:Kollar:2007}, \cite{Abramovich:Hassett:2011} and \cite{Ross:Thomas:2011}.  In particular, our results here (Theorems \ref{numerical:contraction:thm} and \ref{MMP:claim} below) may be of interest in terms of developing the birational perspective of these works in further detail.  
  
Our first result, see Theorem \ref{numerical:contraction:thm} below, gives \emph{numerical criteria}  for contraction of curves on \emph{orbifold surfaces}.  Before stating it, we recall, in precise terms, the concept of \emph{orbifold surface}, which is relevant for our purpose here.

\begin{defn}\label{Orbifold:Surface:Defn}
By an \emph{orbifold surface}, is meant an \emph{orbifold pair} $(S,\Delta)$ where $S$ is a $\QQ$-factorial surface and where $\Delta$ is an \emph{orbifold divisor}
$$
\Delta = \sum_i (1 - 1/m_i) C_i \text{, }
$$
for $m_i$ positive integers, and $C_i$ distinct irreducible curves in $S$.  Note, in particular, that 
 the \emph{orbifold canonical divisor} $\K_S + \Delta$ is $\QQ$-Cartier.
\end{defn}   

For our purposes, we say that a $\QQ$-factorial surface $S$ is \emph{Gorenstein}, if its canonical divisor $\K_S$ is a Cartier divisor.  Theorem \ref{numerical:contraction:thm} is then formulated in the following way.

\begin{theorem}\label{numerical:contraction:thm}
Let $S$ be a Gorenstein $\QQ$-factorial projective complex surface and $(S,\Delta)$ an orbifold pair:
$$
\Delta = \sum_i \left(1 - 1 / m_i \right) C_i,
$$
for $m_i$ positive integers and $C_i$ irreducible curves in $S$.  Suppose that $\Delta$ has simple normal crossings support.  
Let $E$ be a reduced irreducible effective Cartier divisor in $S$, which has the property that $S$ is nonsingular along $E$.  Suppose further that $E$ satisfies the two conditions that:
\begin{enumerate}
\item[(i)]{$E^2 < 0$; and}
\item[(ii)]{$(\K_S + \Delta) \cdot E < 0$.}
\end{enumerate}
Then $E$ is a smooth rational curve and is contracted by a morphism 
$$\pi \colon S \rightarrow S'$$ 
onto a normal projective surface $S'$.  
Further, if $E$ is not a component of $\operatorname{Supp}(\Delta)$, then $E$ is a $(-1)$-curve.  On the other hand, assume that $E$ is a component of $\operatorname{Supp}(\Delta)$ and write:
$$
\Delta = \Delta' + \left(1 - 1 / e \right) E,
$$
with $E$ not a component of $\Delta'$.
Then:
$$
E \cdot \lceil \Delta' \rceil \leq 3.
$$
If equality holds, then:
$$
E \cdot \Delta' = 3 - 1/a - 1/b - 1/c
$$
for $(a,b,c)$ one of the Platonic triples:
\begin{itemize}
\item{$(2,3,5)$;}
\item{$(2,3,4)$;}
\item{$(2,3,3)$; and}
\item{$(2,2,a)$,}
\end{itemize}
for $a\geq 2$.  Finally, in case that 
$$ E \cdot \lceil \Delta' \rceil < 3\text{,}$$ 
then $E$ is a $(-1)$-curve provided that:
$$
E \cdot \Delta' = (1 - 1/(em)) + (1 - 1/(en)),
$$
for some positive integers $m$ and $n$.
\end{theorem}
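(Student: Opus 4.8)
The plan is to derive the assertion from the adjunction formula together with condition (ii), treating the numerical hypothesis on $\Delta'\cdot E$ as the only new input. By the preceding parts of the theorem we already know that $E$ is a smooth rational curve and that it is contracted by $\pi\colon S\rightarrow S'$ onto a normal projective surface, so it suffices to prove that $E^2=-1$.

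First I would record that, since $S$ is nonsingular along $E$ and $E$ is a smooth rational Cartier divisor, adjunction gives $(\K_S+E)\cdot E=2p_a(E)-2=-2$, whence $\K_S\cdot E=-2-E^2$. Writing $\Delta=\Delta'+(1-1/e)E$ with $E$ not a component of $\Delta'$ and expanding, this yields
$$
(\K_S+\Delta)\cdot E=\K_S\cdot E+(1-1/e)E^2+\Delta'\cdot E=-2-\tfrac{1}{e}E^2+\Delta'\cdot E .
$$
Next I would substitute the hypothesis $\Delta'\cdot E=\bigl(1-\tfrac{1}{em}\bigr)+\bigl(1-\tfrac{1}{en}\bigr)$ and invoke condition (ii) to obtain $-\tfrac{1}{e}E^2<\tfrac{1}{em}+\tfrac{1}{en}$, that is, $-E^2<\tfrac{1}{m}+\tfrac{1}{n}$. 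Since $E$ is Cartier on a $\QQ$-factorial surface, $E^2$ is an integer, and it is negative by (i); hence $-E^2\geq 1$. On the other hand $m,n\geq 1$ forces $\tfrac{1}{m}+\tfrac{1}{n}\leq 2$. Therefore $1\leq -E^2<2$, so $E^2=-1$, and $E$ is a $(-1)$-curve.

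I do not expect a genuine obstacle in this final assertion: the substantive work, namely that $E$ is a smooth rational curve and that the contraction $\pi$ exists, has already been carried out in the earlier portions of the theorem, and what remains is the elementary estimate above. The one point that deserves care is the role of the hypothesis $E\cdot\lceil\Delta'\rceil<3$: because $\Delta$ has simple normal crossings support, each component of $\Delta'$ meets $E$ transversally, so this inequality guarantees that $E$ meets $\operatorname{Supp}(\Delta')$ in at most two points and hence that the prescribed two-term expression is indeed the correct shape of the intersection number $\Delta'\cdot E$; once that is granted, the computation closes the argument. It is worth remarking that this gives an orbifold-surface analogue, in the spirit of \cite[Theorem 3.10]{Chan:Ingalls:2005}, of the principle that suitably ``terminal'' negative curves are exceptional curves of the first kind.
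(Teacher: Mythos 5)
Your final computation is correct and coincides with the paper's own argument for that clause: expanding $(\K_S+\Delta)\cdot E$ with $\Delta=\Delta'+(1-1/e)E$ and $(\K_S+E)\cdot E=-2$ yields $-E^2<\tfrac1m+\tfrac1n\leq 2$, and since $-E^2$ is a positive integer, $E^2=-1$. The problem is that this is only the last of the theorem's several assertions, and your opening move --- ``by the preceding parts of the theorem we already know that $E$ is a smooth rational curve and that it is contracted'' --- assumes conclusions of the very statement you are being asked to prove. Nothing earlier in the paper establishes them; they are precisely what Theorem~\ref{numerical:contraction:thm} claims. In particular, your appeal to adjunction in the form $(\K_S+E)\cdot E=2p_a(E)-2=-2$ silently presupposes $p_a(E)=0$, which is the first thing that must be shown.

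Concretely, a complete proof still needs: (1) $p_a(E)=0$, which follows from the genus formula \eqref{arithmetic:genus:formula} once one checks $E\cdot(\K_S+E)<0$; when $E$ is a component of $\Delta$ one writes $0>E\cdot(\K_S+\Delta)=E\cdot(\K_S+E)+\bigl(E\cdot\Delta'-\tfrac1e E^2\bigr)$ and notes that the bracketed term is positive since $E\cdot\Delta'\geq 0$ and $E^2<0$. (2) Contractibility of $E$, which in Case 2 requires Artin's criterion (Theorem~\ref{Artin:Contraction}) rather than Castelnuovo, because there one only knows $E^2<0$, not $E^2=-1$. (3) The assertion that $E$ is a $(-1)$-curve when $E$ is not a component of $\operatorname{Supp}(\Delta)$: there $E\cdot\Delta\geq 0$ forces $E\cdot\K_S<0$, and $p_a(E)=0$ together with $E^2<0$ forces $E^2=E\cdot\K_S=-1$. (4) The bound $E\cdot\lceil\Delta'\rceil\leq 3$: from $-2+E\cdot\Delta'-\tfrac1e E^2<0$ one gets $E\cdot\Delta'<2+\tfrac1e E^2<2$, and since each transversal intersection with a component of $\Delta'$ contributes at least $\tfrac12$ to $E\cdot\Delta'$, there are at most three such contributions. (5) The classification by Platonic triples in the case of equality, i.e., the enumeration of triples $(m_1,m_2,m_3)$ with $\sum(1-1/m_i)<2$. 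None of these steps is difficult, but your proposal omits all of them, so as written it establishes only the final clause of the theorem.
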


We prove Theorem \ref{numerical:contraction:thm} in Section \ref{proof:orbifold:numerical:contraction}.  
It is deduced from the contraction theorem of Artin \cite{Artin:1962}.

\begin{theorem}[Artin's criteria for contraction]\label{Artin:Contraction}
Let $C$ be a connected curve in a normal projective surface $S$ and let $C_i$ be the irreducible components of $C$.  Assume that $S$ is nonsingular along $C$.  Then $C$ is contractible via a morphism 
$$\pi \colon S \rightarrow S'$$ 
to a normal projective surface $S'$ if and only if:
\begin{enumerate}
\item[(i)]{the intersection matrix $||(C_i \cdot C_j) ||$ is negative definite; and
}
\item[(ii)]{
each effective cycle $Z > 0$ with support on $C$ has nonpositive arithmetic genus.
}
\end{enumerate}
\end{theorem}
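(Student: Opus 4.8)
The plan is to prove the two implications of the equivalence separately, reducing each to the local geometry near the point $p := \pi(C)$ onto which $C$ is to be contracted, and to exploit throughout the cohomological reformulation of condition (ii). Since $S$ is nonsingular along $C$, adjunction gives $p_a(Z) = 1 + \tfrac{1}{2}\left(Z^2 + Z\cdot\K_S\right)$ for every effective cycle $Z>0$ supported on $C$, so that (ii) reads $Z\cdot(Z+\K_S)\leq -2$. The translation I would record at the outset is that (ii) is equivalent to $R^1\pi_*\Osh_S = 0$, that is, to $h^1(Z,\Osh_Z)=0$ for all such $Z$; indeed, by the theorem on formal functions $R^1\pi_*\Osh_S$ has formal completion $\varprojlim_n \H^1(Z_n,\Osh_{Z_n})$ along the thickenings $Z_n$ of $C$. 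This is Artin's characterisation of a \emph{rational} singularity at $p$, and it is what links the purely numerical hypothesis to the existence and the nature of the contraction.

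For \emph{necessity}, suppose the contraction $\pi\colon S\to S'$ exists with $S'$ normal and projective. To obtain (i) I would run Mumford's argument: choose an effective Cartier divisor $D'$ on $S'$ through $p$, say a hyperplane section, and write its total transform as $\pi^* D' = \widetilde{D} + \sum_i a_i C_i$, with $\widetilde{D}$ the strict transform and all $a_i>0$. The projection formula gives $\pi^* D'\cdot C_j = D'\cdot\pi_* C_j = 0$ for every $j$, so the strictly positive vector $a=(a_i)$ satisfies $\left(\sum_i a_i C_i\right)\cdot C_j = -\,\widetilde{D}\cdot C_j \leq 0$ for all $j$, with strict inequality for at least one $j$. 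Since $C_i\cdot C_j\geq 0$ for $i\neq j$ and $C$ is connected, the standard $M$-matrix (Perron--Frobenius) lemma --- a connected symmetric matrix with nonnegative off-diagonal entries admitting such a positive vector in its nonpositive cone is negative definite --- yields (i). For the necessity of (ii) I would read the equivalence above backwards: the contraction realises $p$ as a rational singularity, so $h^1(Z,\Osh_Z)=0$, and since $h^0(Z,\Osh_Z)\geq 1$ the adjunction computation gives $p_a(Z) = 1 - h^0(Z,\Osh_Z) + h^1(Z,\Osh_Z) = 1 - h^0(Z,\Osh_Z)\leq 0$ for every $Z>0$ supported on $C$.

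For \emph{sufficiency}, assume (i) and (ii); the task is to construct $S'$ in the projective category. Using negative definiteness I would first manufacture a line bundle with the correct positivity. Starting from an ample $H$ on $S$, invertibility of the intersection matrix (from (i)) produces a unique rational solution $b=(b_i)$ of $\left(H+\sum_i b_i C_i\right)\cdot C_j = 0$; because $-(C_i\cdot C_j)$ is a Stieltjes matrix, its inverse has nonnegative entries, which forces $b_i>0$. Clearing denominators yields a genuine line bundle $L = m\bigl(H+\sum_i b_i C_i\bigr)$ with $L\cdot C_i = 0$ for all $i$, with $L\cdot D>0$ for every other curve $D$, and with $L^2>0$; thus $L$ is nef and big and its null locus is exactly $C$. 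I would then set $S' := \Proj\bigoplus_{n\geq 0}\H^0(S,L^{\otimes n})$, with $\pi$ the induced morphism. Condition (ii) enters here to ensure, following Artin, that $p$ is a rational point and that the section ring is finitely generated with $\Proj$ normal and $\pi_*\Osh_S = \Osh_{S'}$, so that $\pi$ contracts exactly $C$ to the single normal point $p$.

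I expect the genuine difficulty to lie entirely in the sufficiency direction, and within it in the \emph{algebraicity} of the contraction. Grauert's criterion already yields a normal analytic contraction from (i) alone, so the substantive work --- Artin's contribution --- is to perform the contraction in the algebraic category and to show that, under (ii), it produces a projective scheme carrying a rational singularity, the section ring being finitely generated with normal $\Proj$. The technical crux is the cohomological bookkeeping that converts $R^1\pi_*\Osh_S = 0$ into finite generation and normality, effected by algebraising the formal contraction of the completion of $S$ along $C$. By comparison, the necessity direction is soft, resting only on the $M$-matrix lemma and a routine application of the theorem on formal functions.
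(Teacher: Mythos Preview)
The paper does not give a proof: its entire argument is the citation ``This is \cite[Theorem 2.3]{Artin:1962}'' together with a pointer to \cite[Theorem 1.2]{sakai:1984}. Your sketch therefore goes well beyond what the paper offers. For the sufficiency direction, and for the necessity of condition (i), your outline is the standard one and is correct: Mumford's lemma for negative definiteness, then the construction of a nef and big line bundle $L$ with $L\cdot C_i=0$ for all $i$, and Artin's algebraisation of the contraction using the vanishing of $R^1\pi_*\Osh_S$ supplied by (ii).

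Your necessity argument for (ii), however, has a genuine gap. You assert that ``the contraction realises $p$ as a rational singularity'' and then read off $h^1(Z,\Osh_Z)=0$. But projectivity and normality of $S'$ do not force the singularity at $p$ to be rational. Take $V\subset\PP^3$ the projective cone over a smooth plane cubic $E$: then $V$ is a normal projective surface, and blowing up the vertex gives a smooth $\widetilde V$ with a single exceptional curve $E_0\cong E$ satisfying $E_0^2=-3$ and $p_a(E_0)=g(E)=1>0$. Thus $E_0$ is contractible to a normal projective surface while (ii) fails. The step you described as ``soft'' is precisely where the argument collapses, and it cannot be repaired: the ``only if'' assertion for (ii), as the theorem is phrased here, is false in general. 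Artin's Theorem 2.3 is in fact a sufficiency statement; condition (ii) is what guarantees that the resulting singularity is rational, not something forced by the mere existence of a projective contraction. This does not affect the paper, which only ever invokes the sufficiency direction.
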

\begin{proof}
This is \cite[Theorem 2.3]{Artin:1962}. (Compare also with \cite[Theorem 1.2]{sakai:1984}.) 
\end{proof}

By using the uniqueness part of the Contraction Theorem for klt pairs, \cite[Theorem 3.7]{Kollar:Mori:1998}, Theorem \ref{numerical:contraction:thm} has implications in the direction of the b-minimal model program for $(S,\mathbb{D})$, a \emph{simple normal crossing b-terminal pair}.  This result can be compared with \cite[Theorem 3.19]{Chan:Ingalls:2005} which obtains a slightly more refined statement that applies for those orbifold surfaces which are determined by Brauer classes.  We prove Theorem \ref{MMP:claim} in Section \ref{b-orbifold-dim-2}.  

\begin{theorem}\label{MMP:claim}
Suppose that $(S,\mathbb{D})$ is a b-terminal orbifold pair with $\mathbb{D}_S$ a simple normal crossing divisor and $S$ a nonsingular complex projective surface.  Then either $\K_S + \mathbb{D}_S$ is nef or there exists an extremal curve $E$ such that 
$$E \cdot (\K_S + \mathbb{D}_S) < 0$$ and exactly one of the following occurs.  
\begin{itemize}
\item{If 
$E^2 < 0$, then $E$ is a negative rational curve and is contracted by a morphism 
$$\pi \colon S \rightarrow S'\text{,}$$ 
to a $\QQ$-factorial surface $S'$.  If $S$ is nonsingular along $E$, then $\pi$ is the contraction given by Theorem \ref{numerical:contraction:thm}.  
}
\item{If 
$E^2 = 0$, then 
$$\pi \colon S \rightarrow C$$ 
is a ruled surface with $E$ a fibre and $-(\K_S + \mathbb{D}_S)$ is relatively ample for the map $\pi$.
}
\item{If 
$E^2 > 0$, then $S \simeq \PP^2$ and $-\K_S + \mathbb{D}_S)$ is ample.  
}
\end{itemize}
\end{theorem}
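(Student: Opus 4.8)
The plan is to derive the dichotomy and the three cases from the Cone and Contraction Theorems for klt pairs, and then, in the birational case, to reconcile the resulting extremal contraction with the one produced by Theorem~\ref{numerical:contraction:thm}. First I would note that a b-terminal orbifold pair $(S,\mathbb{D})$ is, in particular, klt (indeed terminal) as a pair $(S,\mathbb{D}_S)$ on the fixed model $S$; since $S$ is a nonsingular projective surface and $\mathbb{D}_S$ is simple normal crossing this is immediate. Hence the Cone Theorem applies to $(S,\mathbb{D}_S)$: if $\K_S+\mathbb{D}_S$ is not nef, there is a $(\K_S+\mathbb{D}_S)$-negative extremal ray $R=\RR_{\geq 0}[E]$ of $\overline{\mathrm{NE}}(S)$ spanned by a rational curve $E$, together with the associated extremal contraction $\pi\colon S\to Z$ onto a normal projective variety, characterised by the facts that a curve in $S$ is contracted by $\pi$ exactly when its numerical class lies in $R$, that $\rho(S)=\rho(Z)+1$, that $Z$ inherits $\QQ$-factoriality, and that $-(\K_S+\mathbb{D}_S)$ is $\pi$-ample; here I would invoke \cite[Theorem~3.7]{Kollar:Mori:1998}.

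Next I would distinguish cases according to $\dim Z$, which on a surface is governed by the sign of $E^2$. If $\dim Z=2$, then $\pi$ is birational; since $\rho$ drops by one it contracts a single irreducible curve, necessarily $E$ itself, and $E^2<0$ because its class spans an extremal ray of $\overline{\mathrm{NE}}(S)$; the target $S':=Z$ is a normal $\QQ$-factorial projective surface and $E$ is a negative rational curve with $(\K_S+\mathbb{D}_S)\cdot E<0$. If $\dim Z=1$, then $Z=:C$ is a smooth curve, $\rho(S)=2$, the relative Picard number of $\pi$ is one, so every fibre is irreducible and $\pi\colon S\to C$ is a geometrically ruled surface; then $E^2=0$, $E$ is a fibre, and $-(\K_S+\mathbb{D}_S)$ is relatively ample by construction of $\pi$. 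If $Z$ is a point, then $\rho(S)=1$; since $-(\K_S+\mathbb{D}_S)$ is ample and $\mathbb{D}_S\geq 0$, and every nonzero effective divisor on a surface of Picard number one is a positive multiple of an ample class, it follows that $-\K_S=-(\K_S+\mathbb{D}_S)+\mathbb{D}_S$ is ample; thus $S$ is a del Pezzo surface of Picard number one, hence $S\simeq\PP^2$, and $E^2>0$. The three possibilities for $E^2$ are mutually exclusive, giving the asserted trichotomy.

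Finally, in the case $E^2<0$ I would identify $\pi$ with the contraction of Theorem~\ref{numerical:contraction:thm}. Its hypotheses hold: $S$ is Gorenstein $\QQ$-factorial, $\mathbb{D}_S$ has simple normal crossing support, $E$ is a reduced irreducible effective Cartier divisor along which $S$ is nonsingular, $E^2<0$, and $(\K_S+\mathbb{D}_S)\cdot E<0$. Theorem~\ref{numerical:contraction:thm} then yields a morphism $\pi'\colon S\to S''$ onto a normal projective surface contracting exactly $E$. Since an irreducible curve of negative self-intersection on a surface spans an extremal ray of $\overline{\mathrm{NE}}(S)$, and that ray is $R$ by the first paragraph, $\pi'$ and $\pi$ are two contractions of the same extremal ray; by the uniqueness assertion in \cite[Theorem~3.7]{Kollar:Mori:1998} they coincide (equivalently, a morphism from a surface that contracts precisely the connected curve $E$ to a point is unique up to isomorphism of the target over $S$). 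This gives the refined statement in the first bullet.

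I expect the reconciliation step of the last paragraph to be the main obstacle: one must be careful that the Artin-type contraction furnished by Theorem~\ref{numerical:contraction:thm} really is the klt extremal contraction, which is exactly where the uniqueness part of \cite[Theorem~3.7]{Kollar:Mori:1998} enters. A secondary point needing care is the passage from the b-terminal hypothesis on $(S,\mathbb{D})$ to a klt pair $(S,\mathbb{D}_S)$ on $S$ of the sort to which the Cone Theorem applies, together with the preservation of $\QQ$-factoriality under the contraction in the $\dim Z=2$ case.
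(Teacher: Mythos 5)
Your proposal is correct and follows essentially the same route as the paper: the Cone/Contraction Theorem for the klt pair $(S,\mathbb{D}_S)$ produces the extremal curve and its contraction, the three cases are separated by the dimension of the target (equivalently the sign of $E^2$), and in the birational case the contraction is identified with that of Theorem \ref{numerical:contraction:thm} via the uniqueness assertion of \cite[Theorem 3.7]{Kollar:Mori:1998}, the paper handling the $E^2\geq 0$ cases by citing \cite[Theorem 1.28]{Kollar:Mori:1998} where you spell out the Picard-number argument. One small caveat: your parenthetical claim that $(S,\mathbb{D}_S)$ is ``indeed terminal'' is an overstatement --- b-terminal only guarantees that the trace pair is klt (cf.\ Proposition \ref{b-terminal-klt}; a blow-up at a point of $\lceil\mathbb{D}_S\rceil$ where two components with large coefficients meet can have negative discrepancy) --- but this is harmless since only the klt property is used.
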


As some additional motivation for the results of the present article, we note that the conclusion of Theorem \ref{numerical:contraction:thm}, which is formulated within the context of orbifold surfaces, is weaker than that of \cite[Theorem 3.10]{Chan:Ingalls:2005}.  In particular, it prevents a direct establishment of an orbifold minimal model program along the lines of what is done in \cite[Section 3.5]{Chan:Ingalls:2005}.  

But on the other hand, in Section \ref{birationial:discrepancies}, we discuss the concept of discrepancy for birational orbifold pairs.  Then, in Proposition \ref{orbifold:contraction:proposition}, we show how, the condition of positive birational discrepancy (which we call $b$-terminal) for orbifold surfaces, combined with Theorem \ref{numerical:contraction:thm}, allows for construction of a contraction morphism by analogy with \cite[Proposition 3.14]{Chan:Ingalls:2005}.

We believe that this slight subtlety, which is observed here, amongst  those orbifold surfaces which arise from ramification divisors of Brauer classes, compared to general orbifold surfaces, is of some independent interest.  It provides some motivation for future investigation, in addition to what we do here.  

Moreover, our investigation, in the present article, raises the question as to how the ramification divisors of Brauer classes, as in the work of Chan and Ingalls \cite{Chan:Ingalls:2005}, fit into the broader context of orbifold divisors.  We formulate, in precise terms, one approach to this problem as Problem \ref{ramification:orbifold:characterization:dim:2} below.

\begin{problem}\label{ramification:orbifold:characterization:dim:2}
Let $S$ be a nonsingular complex projective surface, with function field $\mathbb{C}(S)$, and consider an orbifold pair $(S,\Delta)$, with the orbifold divisor $\Delta$ having simple normal crossings support.  Give necessary and sufficient conditions for the boundary orbifold divisor $\Delta$ to have shape
$$
\Delta = \mathbb{D}(\alpha)_S = \Delta_\alpha \text{,}
$$
for $\mathbb{D}(\alpha)_S$ the trace on $S$ of the ramification birational divisor $\mathbb{D}(\alpha)$, which is determined by $\alpha$ an element of $\Br(\CC(S))$, the Brauer group of $\CC(S)$.
\end{problem}

As another direction for future investigation, our discussion of birational discrepancy for birational orbifold pairs (Section \ref{birationial:discrepancies}), suggests the possibility for its interpretation within the context of generalized pairs, in the sense of Birkar and Zhang \cite[Definition 1.4 and Section 4]{Birkar:Zhang:2016}.  We do not pursue a further development of that view point here.

\section{Preliminaries}

By a \emph{surface}, unless explicitly stated otherwise, we mean a \emph{Gorenstein}, \emph{$\QQ$-factorial}, \emph{irreducible}, \emph{projective}, \emph{complex surface} $S$.  A curve $C$ in $S$ is \emph{contractible} if there exists a map 
$$\pi \colon S \rightarrow S'$$ 
onto a surface $S'$ which is a single point.  Such a map $\pi$ is uniquely determined and is called the \emph{contraction} of $C$.  By a \emph{simple normal crossing} divisor on $S$, we mean a Carter divisor $\Delta$ on $S$, with rational coefficients, and having the property that each of the irreducible components of $\operatorname{Supp}(\Delta)$ are nonsingular and each pair of these irreducible components intersect everywhere transversally.

We recall, similar to \cite{sakai:1984}, a description of intersection theory for curves in $S$.  The intersection pairing:
$$
\Div_\QQ(S) \times \Div_\QQ(S) \rightarrow \QQ.
$$
may be described in the following way.  

We choose a resolution of singularities 
$$\pi \colon S' \rightarrow S$$ 
with exceptional set: 
$$\operatorname{Exc}(\pi) = \bigcup_i E_i.$$  
For a (Weil) divisor 
$$D \in \Div_\QQ(S) \text{,}$$ 
the \emph{inverse image} in $S'$ is given by:
$$
\pi^* D := \pi_*^{-1}D + \sum a_i E_i.
$$
Here, $\pi_*^{-1}D$ is the strict transform of $D$ with respect to $\pi$; the rational numbers $a_i$ are uniquely determined by the equations:
$$
\pi_*^{-1}D \cdot E_j + \sum_i a_i E_i \cdot E_j = 0.
$$
for all $j$.

Given two divisors $D$ and $D'$ in $\Div_\QQ(S)$, their \emph{intersection number} is then defined to be the (well defined) quantity:
$$
D \cdot D' := (\pi^* D) \cdot (\pi^* D').
$$

Let $E$ be an effective Cartier divisor in $S$ and assume that $S$ is nonsingular along $E$.  Then, as in \cite[p. 486]{Artin:1962}, $E$ has \emph{arithmetic genus} given by:
\begin{equation}\label{arithmetic:genus:formula}
p_a(E) = \frac{1}{2} \left( (E^2) + (E \cdot \K_S) \right) + 1.
\end{equation}
In particular:
$$
1 - p_a(E) = \chi(\Osh_E) = h^0(E,\Osh_E) - h^1(E,\Osh_E);
$$
{see \cite[Equation (1.1)]{Artin:1962}.}

\section{A numerical criterion for contraction}\label{proof:orbifold:numerical:contraction}

Here, we establish Theorem \ref{numerical:contraction:thm}.

\begin{proof}[Proof of Theorem \ref{numerical:contraction:thm}]
The proof is similar to the proof of \cite[Theorem 3.10]{Chan:Ingalls:2005}.
We distinguish amongst two cases:
\begin{itemize}
\item{
{\bf Case 1.} The divisor $E$ is not contained in the discriminant curve $\operatorname{Supp}(\Delta)$.
}
\item{
{\bf Case 2.}  The divisor $E$ is contained in the discriminant curve $\operatorname{Supp}(\Delta)$. In particular, $E$ is a component of $\Delta$.
}
\end{itemize}

In {\bf Case 1}, the intersection number $E . \Delta$ is non-negative:
$$ E \cdot \Delta \geq 0;$$
it then follows that the intersection number $E.\K_S$ is negative:
$$ E \cdot \K_S \leq -1 \text{.}$$
On the other hand, since $E$ has negative self-intersection number:
$$ E^2 \leq -1,$$
the arithmetic genus formula, \eqref{arithmetic:genus:formula}, 
implies, since  
$$E \cdot (E + \K_S) \leq -2,$$
that $E$ has arithmetic genus equal to zero:
$$ p_a(E) = 0.$$
In particular, $E$ is a smooth rational curve.

Finally, note (since both $E^2$ and $E \cdot \K_S$ are negative) that the genus formula also implies that:
$$ E \cdot \K_S = E^2 = -1;$$
thus $E$ is a $(-1)$-curve.  In particular, there exists a morphism 
$$\pi \colon S \rightarrow S'$$ 
which contracts exactly $E$.

Next, note that
$$ E \cdot \Delta < 1 $$
since
$$ (\K_S + \Delta) \cdot E < 0$$
and
$$ \K_S \cdot E = -1.$$
For the sake of completeness, note that the coefficients of $\Delta$ are of the form:
$$
1 - 1 / e_i \geq 1 / 2.
$$
One consequence is that $E$ can meet $\operatorname{Supp}(\Delta)$ in at most one point.  In particular, we contract $E$ via 
$$\pi \colon S \rightarrow S'\text{.}$$  
This completes the proof of {\bf Case 1}.

We now consider {\bf Case 2}.
By assumption, $E$ is a component of $\Delta$.  Write:
$$
\Delta = \Delta' + (1-1/e)E.
$$
Then:
$$
E \cdot \Delta' - (1/e)E^2 > 0
$$
and
$$
0 > E \cdot (\K_S + \Delta) = E \cdot \K_S + E^2 + \left(E \cdot \Delta' - \frac{1}{e} E^2 \right)\text{,}
$$
so:
$$
E \cdot (\K_S + E)<0;
$$
the genus formula \eqref{arithmetic:genus:formula} then implies that:
$$
p_a(E) = 0.
$$
Again, $E$ is a smooth rational curve and
we have satisfied the hypothesis of the contraction theorem of Artin.

By assumption, the discriminant curve $\operatorname{Supp}(\Delta)$ has simple normal crossings support.  Thus:
$$
E \cdot \Delta' = \sum \left( 1 - \frac{1}{m_i} \right),
$$
for suitable integers $m_i \geq 2$.  Also: 
$$E \cdot \Delta' = \sum_i \left( 1 - 1 / m_i \right) < 2\text{,}$$
and
$$ m_i \in \ZZ_{\geq 2};$$
it follows that:
$$E \cdot \lceil \Delta' \rceil \leq 3.$$   

Next suppose that
$$E \cdot \lceil \Delta' \rceil = 3.$$  
The only possible triples 
$$(m_1,m_2,m_3)\text{,} $$
with $m_i \in \ZZ_{\geq 1}$, which have the property that 
$$2 - E \cdot \Delta' > 0$$ 
are the Platonic triples:
\begin{itemize}
\item{$(2,3,5)$;}
\item{$(2,3,4)$;}
\item{$(2,3,3)$;  and} 
\item{$(2,2,a)$, }
\end{itemize}
for $a\geq 2$.  

Suppose now that:
$$E \cdot \lceil \Delta' \rceil = 2$$
and
$$
E \cdot \Delta' = 2 - 1/(em) -1/(en),
$$ 
for positive integers $m$ and $n$.  In this context, since:
$$
1 \leq - E^2 < e(2 - E \cdot \Delta') = 1/m + 1/n,
$$
either $m$ or $n$ equals one.  Thus:
$$
1 \leq - E^2 < 2
$$
and so $E$ is a $(-1)$-curve as desired.
\end{proof}

\begin{remark}
Note that in {\bf Case 2} of the above proof, we also have the lower bound:
$$E^2 > e(-2 + E \cdot \Delta').$$
Further, in both {\bf Case 1} and {\bf Case 2}, it also holds true that:
$$
-2 = E \cdot (\K_S + E).
$$
\end{remark}

\begin{remark}
It would be interesting if an argument along the lines of Theorem \ref{numerical:contraction:thm} could be used to characterize $(-1)$-curves which have the properties that 
$$E^2 < 0$$ 
and 
$$(\K_S + \Delta) \cdot E < 0$$ 
for $\Delta$ an orbifold divisor.  For example, if $E$ is a component of $\Delta$, could this condition imply that $E$ is a $(-1)$-curve if and only if $\Delta_\alpha$ is the ramification divisor of some Brauer class $\alpha \in \operatorname{Br}(\mathbb{C}(S))$ having terminal ramification data?
\end{remark}

\section{Pairs and discrepancies}\label{pairs:descrep}

In this section, we make precise our conventions about pairs and discrepancies.  We work over the complex numbers $\CC$.  Our conventions are consistent with \cite{Kollar:Mori:1998}. 

 Let $(X,\Delta)$ be a \emph{pair} with $X$ a proper normal variety and 
$$\Delta = \sum a_i D_i$$ 
a $\QQ$-divisor.  Suppose that $(X,\Delta)$ is \emph{$\QQ$-Gorenstein}, i.e., that $\K_X + \Delta$ is $\QQ$-Cartier.  

The \emph{discrepancies} $a(E;X,\Delta)$ of exceptional prime divisors $E$ over $X$, with respect to $\Delta$ are defined by the conditions that:
$$ \K_Y + f_*^{-1} \Delta \equiv_{\mathrm{num}} f^*(\K_X + \Delta) + \sum_{\substack{E \text{ is an exceptional prime divisor} \\ \text{ over $X$} }} a(E;X,\Delta) E.$$
Here 
$$f \colon Y \rightarrow X$$ 
is a proper birational morphism from a normal variety $Y$ and $f_*^{-1} \Delta$ denotes the \emph{strict} (or \emph{birational}) transform of $\Delta$.  

The \emph{discrepancy} of the pair $(X,\Delta)$ is then defined to be:
$$ \operatorname{discrep}(X,\Delta) = \inf_{\substack{E \text{ is an exceptional prime divisor} \\ \text{ over $X$} }} \{ a(E;X,\Delta)  \}.$$

In line with \cite{Kollar:Mori:1998}, we say that $(X,\Delta)$ is \emph{terminal} if: 
$$\operatorname{discrep}(X,\Delta) > 0,$$ 
\emph{canonical} if: 
$$\operatorname{discrep}(X,\Delta) \geq 0,$$ 
\emph{log canonical} (\emph{lc}) if:
$$
\operatorname{discrep}(X,\Delta) \geq -1,
$$
and \emph{Kawamata log terminal} (\emph{klt}) if: 
$$\operatorname{discrep}(X,\Delta) > -1 $$ 
and 
$$\lfloor \Delta \rfloor \leq 0 \text{.}$$

More generally, as in \cite{Shokurov:1996}, fixing a positive real number $\epsilon > 0$, we say that  
$(X,\Delta)$ is \emph{$\epsilon$-terminal} if: 
$$\operatorname{discrep}(X,\Delta) > \epsilon,$$ 
\emph{$\epsilon$-canonical} if: 
$$\operatorname{discrep}(X,\Delta) \geq \epsilon,$$ 
\emph{$\epsilon$-plt} if: 
$$
\operatorname{discrep}(X,\Delta) > \epsilon -1,
$$
\emph{$\epsilon$-lc} if:
$$
\operatorname{discrep}(X,\Delta) \geq \epsilon -1,
$$
and \emph{$\epsilon$-klt} if: 
$$\operatorname{discrep}(X,\Delta) > \epsilon -1 $$ 
and 
$$\lfloor \Delta \rfloor \leq 0\text{.}$$

\section{$b$-log pairs and $b$-discrepancies}\label{birationial:discrepancies}

We fix our conventions about $b$-log pairs and $b$-discrepancies for $b$-divisors (compare with \cite{Shokurov:1996} and \cite{Chan:Ingalls:2005}).  

Let $(X,\mathbb{D})$ be a proper \emph{fractional $b$-log pair}.  Then $X$ is a proper normal variety, $\mathbb{D}$ is a $b$-divisor on $X$, the divisor $\K_X + \mathbb{D}_X$ is $\QQ$-Cartier and the coefficients of $\mathbb{D}_Y$, for all proper normal models $Y$ of $X$, lie in $[0,1) \cap \QQ$.  In particular, $(X,\mathbb{D}_X)$ is a pair. Here $\mathbb{D}_Y$ denotes the \emph{trace} of $\mathbb{D}$ on $Y$ a proper normal model of $X$.  

For each prime divisor $F$ over $X$, let 
$$d_F \in [0,1) \cap \QQ$$ 
be the coefficient of $\mathbb{D}$ along $F$.  Define the \emph{ramification index} 
$$
r_F \in [1,\infty) \cap \QQ
$$ 
of $\mathbb{D}$ along $F$ by:
$$ 
r_F = \frac{1}{1-d_F} \in [1, \infty) \cap \QQ .
$$
Equivalently:
$$ 
d_F = 1 - \frac{1}{r_F} \in [0,1) \cap \QQ. 
$$

  The \emph{$b$-discrepancies}, of exceptional divisors $E$ over $X$, with respect to $(X,\mathbb{D})$ are then defined by:
$$ 
b(E;X,\mathbb{D}) + 1 = r_E ( a(E;X,\mathbb{D}_X) + 1 ).
$$

  Put:
\begin{equation}\label{b:discrep:relation}
b'(E;X,\mathbb{D}) = \frac{b(E;X,\mathbb{D})}{r_E}.
\end{equation}
Then note:
$$ 
\K_Y + \mathbb{D}_Y \equiv f^*(\K_X + \mathbb{D}_X) + \sum_{\substack{E \text{ is a prime $f$-exceptional } \\ \text{ on $Y$} }} b'(E;X,\mathbb{D}) E
$$
for all proper normal models 
$$f \colon Y \rightarrow X$$ 
of $X$. Indeed, this follows since:
\begin{equation}\label{a:b:discrep:relation}
b'(E'X,\mathbb{D}) = a(E; X, \mathbb{D}_X) + 1 - \frac{1}{r_E}
\end{equation}
combined with the two relations:
$$
\K_Y + \mathbb{D}_Y = \K_Y + f_*^{-1} \mathbb{D}_X + \sum_E \left(1 - \frac{1}{r_E} \right) E 
$$ 
and
\begin{multline*}
\K_Y + f_*^{-1} \mathbb{D}_X + \sum_E \left(1 - \frac{1}{r_E} \right) E 
= \\
 f^*(\K_X + \mathbb{D}_X) + \sum_E \left(a(E;X,\mathbb{D}_X) + 1 - \frac{1}{r_E} \right) E.
 \end{multline*}
These relations also highlight the intuition behind the concept of b-discrepancy.

Set:
$$ 
\operatorname{b-discrep}(X,\mathbb{D}) := \inf_{\substack{\text{exceptional prime} \\
\text{ divisors $E$ over $X$}}}  \{b(E;X,\mathbb{D}) \}.
$$
We say that $(X,\mathbb{D})$ is \emph{$b$-terminal} if: 
$$
\operatorname{b-discrep}(X,\mathbb{D}) > 0,
$$
that $(X,\mathbb{D})$ is \emph{$b$-canonical} if: 
$$
\operatorname{b-discrep}(X,\mathbb{D}) \geq 0,
$$
that $(X,\mathbb{D})$ is \emph{$b$-log terminal} ($b$-lt) if:
$$
\operatorname{b-discrep}(X,\mathbb{D}) > - 1,
$$
and that $(X,\mathbb{D})$ is \emph{$b$-log canonical} ($b$-lc) if:
$$
\operatorname{b-discrep}(X,\mathbb{D}) \geq - 1.
$$
Note that these concepts are also formulated in 
\cite{Shokurov:1996}.  (See also \cite[Section 3.5]{Grieve:Ingalls:2016}.)

  Fix $\delta \geq -1$ and $\epsilon > 0$.  We then want to compare the discrepancies $a(E; X, \mathbb{D}_X)$, $b(E;X,\mathbb{D})$ and $b'(E;X,\mathbb{D})$ for $E$ a fixed prime exceptional divisor over $X$.  Motivated by our above concepts of $\epsilon$-discrepancy, here we make note of the following elementary remark.

\begin{proposition}\label{discrepancy:prop}
Fix $\delta \geq -1$ and $\epsilon > 0$.
The following assertions hold true.
\begin{enumerate}
\item[(i)]{
If 
$$b(E;X,\mathbb{D}) >  \epsilon + \delta,$$ then 
$$b'(E;X,\mathbb{D}) >  \frac{\epsilon + \delta}{r_E}$$
and
$$
a(E;X,\mathbb{D}_X) > \frac{\epsilon + \delta + 1}{r_E} - 1.
$$
}
\item[(ii)]{
If
$$
a(E;X,\mathbb{D}_X) > \epsilon + \delta,  
$$
then
$$
b(E;X,\mathbb{D}) > r_E(\epsilon + \delta + 1) - 1
$$
and
$$
b'(E;X,\mathbb{D}) > \epsilon + \delta + 1 - \frac{1}{r_E}.
$$
}
\end{enumerate}
The same inequalities holds true by replacing $>$ with $\geq$.
\end{proposition}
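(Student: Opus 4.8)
The plan is to derive everything directly from the three structural identities recorded just before the statement: the defining relation $b(E;X,\mathbb{D}) + 1 = r_E\,(a(E;X,\mathbb{D}_X)+1)$, the normalization \eqref{b:discrep:relation}, namely $b'(E;X,\mathbb{D}) = b(E;X,\mathbb{D})/r_E$, and the consequence \eqref{a:b:discrep:relation}, namely $b'(E;X,\mathbb{D}) = a(E;X,\mathbb{D}_X) + 1 - 1/r_E$. Throughout I will use only that $r_E \in [1,\infty)\cap\QQ$, in particular $r_E > 0$, so that multiplying or dividing an inequality by $r_E$ preserves its direction, and that adding a constant to both sides of an inequality preserves its direction.

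For part (i), I would start from the hypothesis $b(E;X,\mathbb{D}) > \epsilon+\delta$. Dividing through by $r_E>0$ and invoking \eqref{b:discrep:relation} gives $b'(E;X,\mathbb{D}) > (\epsilon+\delta)/r_E$, which is the first claimed inequality. Next, adding $1$ to the hypothesis and dividing by $r_E$, the defining relation, rewritten as $a(E;X,\mathbb{D}_X)+1 = (b(E;X,\mathbb{D})+1)/r_E$, yields $a(E;X,\mathbb{D}_X)+1 > (\epsilon+\delta+1)/r_E$, that is, $a(E;X,\mathbb{D}_X) > (\epsilon+\delta+1)/r_E - 1$, which is the second claimed inequality.

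For part (ii), I would start from $a(E;X,\mathbb{D}_X) > \epsilon+\delta$. Adding $1$ and multiplying by $r_E > 0$, the defining relation gives $b(E;X,\mathbb{D})+1 = r_E\,(a(E;X,\mathbb{D}_X)+1) > r_E(\epsilon+\delta+1)$, hence $b(E;X,\mathbb{D}) > r_E(\epsilon+\delta+1)-1$, the first claimed inequality. For the second, substitute the hypothesis into \eqref{a:b:discrep:relation}: $b'(E;X,\mathbb{D}) = a(E;X,\mathbb{D}_X)+1-1/r_E > \epsilon+\delta+1-1/r_E$.

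Finally, for the non-strict versions, I would observe that each implication above uses only that the order relation is preserved under adding a constant and under multiplication or division by the positive number $r_E$; consequently, replacing every $>$ in the hypotheses by $\geq$ leaves the same chain of manipulations valid with $\geq$ in place of $>$ in all conclusions. No genuine obstacle arises: the content is entirely bookkeeping with the three displayed identities, and the only point requiring (trivial) care is tracking that $r_E>0$ so that inequality directions are not reversed.
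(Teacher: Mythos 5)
Your proposal is correct and is exactly the argument the paper intends: the paper's proof simply states that the inequalities are easily deduced from the relations \eqref{b:discrep:relation} and \eqref{a:b:discrep:relation}, and your write-up spells out precisely those manipulations, with the (correct) observation that $r_E>0$ preserves inequality directions.
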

\begin{proof}
Each of the inequalities are easily deduced from the relations given by \eqref{b:discrep:relation} and \eqref{a:b:discrep:relation}.
\end{proof}

Motivated by those $b$-divisors which arise from Brauer classes, for example \cite{Chan:Ingalls:2005}, combined with the concept of orbifold pairs in the sense of Campana, for instance \cite{Campana:2004}, we define a general concept of \emph{orbifold $b$-divisor}.

\begin{defn}
Let $(X,\mathbb{D})$ be a proper \emph{fractional $b$-log pair}.  We say that $(X,\mathbb{D})$ is a \emph{$b$-orbifold pair} if the ramification indices associated to each trace of $\mathbb{D}$ are integers.
\end{defn}

We also record the following remark, by analogy with \cite[Proposition 3.15]{Chan:Ingalls:2005}. 

\begin{proposition}\label{b-terminal-klt}   Let $(X,\mathbb{D})$ be a b-orbifold pair.  If  $(X,\mathbb{D})$ is b-terminal, then the pair $(X,\mathbb{D}_X)$ is klt.   Conversely, if the pair $(X,\mathbb{D}_X)$ is klt, then the pair $(X,\mathbb{D})$ is b-log terminal. 
\end{proposition}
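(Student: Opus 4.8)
The plan is to argue directly from the defining relation between $b$-discrepancy and log discrepancy,
$$
b(E;X,\mathbb{D}) + 1 = r_E\,(a(E;X,\mathbb{D}_X)+1),
$$
together with the elementary facts that $r_E = 1/(1-d_E)$ with $d_E\in[0,1)\cap\QQ$, so that $r_E\in[1,\infty)\cap\QQ$ (and, in the $b$-orbifold case, $r_E\in\ZZ_{\geq 1}$), and that $\lfloor \mathbb{D}_X\rfloor = 0$ because every coefficient of $\mathbb{D}_X$ lies in $[0,1)$.

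First, suppose $(X,\mathbb{D})$ is $b$-terminal, so that $\operatorname{b-discrep}(X,\mathbb{D}) > 0$ and hence $b(E;X,\mathbb{D}) > 0$ for every exceptional prime divisor $E$ over $X$. Since $r_E > 0$, the relation above gives
$$
a(E;X,\mathbb{D}_X) + 1 = \frac{b(E;X,\mathbb{D})+1}{r_E} > \frac{1}{r_E},
$$
so that $a(E;X,\mathbb{D}_X) > 1/r_E - 1 = -d_E > -1$ for every such $E$. Combined with $\lfloor \mathbb{D}_X\rfloor = 0 \leq 0$, this gives the klt condition, once we know the strict inequality $\operatorname{discrep}(X,\mathbb{D}_X) > -1$ rather than merely $\geq -1$; for this I would invoke the standard reduction to a single log resolution of $(X,\mathbb{D}_X)$ (see \cite{Kollar:Mori:1998}), on which there are only finitely many exceptional discrepancies, all of them $> -1$ by the bound just established. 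Hence $(X,\mathbb{D}_X)$ is klt.

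Conversely, suppose $(X,\mathbb{D}_X)$ is klt and set $\eta := \operatorname{discrep}(X,\mathbb{D}_X) > -1$. Then $a(E;X,\mathbb{D}_X) \geq \eta$, equivalently $a(E;X,\mathbb{D}_X)+1 \geq \eta+1 > 0$, for every exceptional prime divisor $E$ over $X$. Multiplying by $r_E \geq 1$ and applying the defining relation,
$$
b(E;X,\mathbb{D}) + 1 = r_E\,(a(E;X,\mathbb{D}_X)+1) \geq \eta + 1,
$$
so $b(E;X,\mathbb{D}) \geq \eta$ for every such $E$, whence $\operatorname{b-discrep}(X,\mathbb{D}) \geq \eta > -1$. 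Thus $(X,\mathbb{D})$ is $b$-log terminal.

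The only genuinely non-formal point is in the first implication: passing from the pointwise bounds $a(E;X,\mathbb{D}_X) > -1$ to the strict inequality $\operatorname{discrep}(X,\mathbb{D}_X) > -1$ is not automatic, since a priori the quantities $a(E;X,\mathbb{D}_X)+1 = (b(E;X,\mathbb{D})+1)/r_E$ could accumulate to $0$ as the ramification indices $r_E$ grow — which is exactly why one passes to a log resolution, where only finitely many discrepancies are in play and all the relevant $d_E$ are $< 1$. This same phenomenon explains why the converse cannot be upgraded to ``$(X,\mathbb{D})$ is $b$-terminal'', matching the asymmetry in the statement. The reverse implication itself involves no such subtlety, because klt-ness already supplies a uniform lower bound $\eta > -1$ on all log discrepancies, and scaling the positive quantity $a(E;X,\mathbb{D}_X)+1$ by $r_E \geq 1$ can only increase it.
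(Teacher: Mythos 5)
Your proof is correct and follows essentially the same route as the paper's: both arguments translate between $a(E;X,\mathbb{D}_X)$ and $b(E;X,\mathbb{D})$ via the defining relation $b(E;X,\mathbb{D})+1=r_E\,(a(E;X,\mathbb{D}_X)+1)$ (packaged in the paper as Proposition~\ref{discrepancy:prop}) and reduce the klt verification to a single log resolution using \cite[Corollary 2.32]{Kollar:Mori:1998}. Your write-up is in fact more explicit than the paper's about the one genuine subtlety, namely that the pointwise bounds $a(E;X,\mathbb{D}_X)>-d_E$ could a priori accumulate at $-1$ as $r_E\to\infty$, which is exactly why the passage to a fixed log resolution is needed.
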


\begin{proof}
We argue as in \cite[Proposition 3.15]{Chan:Ingalls:2005}.  Specifically, as in \cite[Corollary 2.32]{Kollar:Mori:1998}, there is a log resolution of $(X,\mathbb{D}_X)$ which we may use to compute discrepancy.  Fix such a resolution 
$$f \colon X' \rightarrow X.$$  
We then study the discrepancy equations:
$$
\K_{X'} + \mathbb{D}_{X'} \equiv f^*(\K_X + \mathbb{D}_X) + \sum b'(X,\mathbb{D};E_i) E_i
$$
and
$$
\K_{X'} + f^{-1}_* \mathbb{D}_X \equiv \pi^*(\K_X + \mathbb{D}_X) + \sum a(X,\mathbb{D}_X; E_i) E_i.
$$
In particular:
$$
\operatorname{b-discrep}(X,\mathbb{D}) = \inf_{E_i} \{ r_{E_i} b'(X,\mathbb{D};E_i) \}
$$
and
$$
\operatorname{discrep}(X,\mathbb{D}_X) = \inf_{E_i} \{a(X,\mathbb{D}_X;E_i) \}.
$$
Applying Proposition \ref{discrepancy:prop}, the conclusion desired by Proposition \ref{b-terminal-klt} then follows.
\end{proof}

\begin{example}
In order to illustrate the above concepts, here we indicate the manner in which the concept of \emph{terminal models} from \cite[Corollary 1.4.3]{Birkar:Cascini:Hacon:McKernan} relates to the concept of $b$-terminal models for a given projective fractional $b$-log pair $(X,\mathbb{D})$.  Fix a projective log resolution 
$$f \colon X'\rightarrow X\text{,}$$ so that the pair $(X',\mathbb{D}_{X'})$ is klt.  Let 
$$g \colon X'' \rightarrow X'$$ 
be a \emph{terminal model}, in the sense of \cite[p. 413]{Birkar:Cascini:Hacon:McKernan}, of the klt pair $(X',\mathbb{D}_{X'})$.    Then, in particular, $X''$ is $\QQ$-factorial, the pair $(X'',\mathbb{D}_{X''})$ is terminal and it follows that the fractional $b$-log pair $(X'',\mathbb{D})$ is $b$-terminal.
\end{example}

\section{$b$-Orbifold divisors in dimension $2$}\label{b-orbifold-dim-2}

  By analogy with \cite[Proposition 3.13]{Chan:Ingalls:2005}, we now study the condition that:
$$ 
\operatorname{b-discrep}(S,\mathbb{D}) > 0.
$$ 

\begin{proposition}\label{orbifold:contraction:proposition}
Let $(S,\mathbb{D})$ be an orbifold surface with 
$$\operatorname{b-discrep}(S,\mathbb{D}) > 0\text{.}$$  
Then $S$ admits a nonsingular model 
$$\pi \colon S' \rightarrow S$$ 
that has the following two properties.
\begin{enumerate}
\item[(i)]{The pair $(S',\mathbb{D}_{S'})$ has simple normal crossings support.}
\item[(ii)]{If $S' \not = S$, then $S'$ admits an irreducible curve $E$ with the property that:
$$
(\K_{S'} + \mathbb{D}_{S'}) \cdot E < 0
$$
and
$$
E^2 < 0.
$$
}
\end{enumerate}
\end{proposition}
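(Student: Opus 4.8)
The plan is to mimic the argument of \cite[Proposition 3.13]{Chan:Ingalls:2005} in the setting of $b$-discrepancies as developed in Section \ref{birationial:discrepancies}. First I would fix a log resolution $\pi\colon S'\to S$ of the pair $(S,\mathbb{D}_S)$, which exists by \cite[Corollary 2.32]{Kollar:Mori:1998} and \cite[Corollary 2.32]{Kollar:Mori:1998}-type arguments; after possibly blowing up further we may assume $\pi$ is a composition of point blow-ups and that $\operatorname{Supp}(\mathbb{D}_{S'})$ together with the exceptional locus $\operatorname{Exc}(\pi)=\bigcup_i E_i$ is a simple normal crossings divisor. This secures conclusion (i) immediately and reduces everything to verifying (ii) when $S'\neq S$. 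Note that, because $(S,\mathbb{D})$ is a $b$-orbifold surface with $\operatorname{b-discrep}(S,\mathbb{D})>0$, Proposition \ref{b-terminal-klt} tells us that $(S,\mathbb{D}_S)$ is klt; in particular the traces $\mathbb{D}_{S'}$ have coefficients in $[0,1)\cap\QQ$ on every model, and all $b$-discrepancies $b(E_i;S,\mathbb{D})$ are strictly positive.

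The core of the argument is to locate, among the $\pi$-exceptional curves $E_i$, one that is both $(\K_{S'}+\mathbb{D}_{S'})$-negative and has negative self-intersection. First I would observe that since $S'\neq S$ and $\pi$ is a composition of blow-ups, there is at least one $\pi$-exceptional prime divisor $E=E_i$ that is a $(-1)$-curve with respect to $S'$ (the last curve extracted, or any curve in $\operatorname{Exc}(\pi)$ maximal for the "extracted last" partial order has $E^2=-1$ on $S'$); this gives $E^2<0$. For the negativity against $\K_{S'}+\mathbb{D}_{S'}$, I would use the discrepancy relation
$$
\K_{S'}+\mathbb{D}_{S'}\equiv \pi^*(\K_S+\mathbb{D}_S)+\sum_i b'(E_i;S,\mathbb{D})\,E_i,
$$
from Section \ref{birationial:discrepancies}, intersect both sides with $E$, and use the projection formula $\pi^*(\K_S+\mathbb{D}_S)\cdot E=0$ to get
$$
(\K_{S'}+\mathbb{D}_{S'})\cdot E=\Big(\sum_i b'(E_i;S,\mathbb{D})\,E_i\Big)\cdot E.
$$
Here $b'(E;S,\mathbb{D})=b(E;S,\mathbb{D})/r_E>0$, and since $E$ is a $(-1)$-curve we have $b'(E;S,\mathbb{D})\,E^2=-b'(E;S,\mathbb{D})<0$, while $b'(E_j;S,\mathbb{D})\,E_j\cdot E\geq 0$ for $j\neq i$ only when the $E_j$ meet $E$; the danger is that those non-negative cross terms might outweigh the negative self-intersection term.

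The main obstacle, therefore, is exactly this: the naive choice of a $(-1)$-curve need not be $(\K_{S'}+\mathbb{D}_{S'})$-negative, because neighbouring exceptional curves contribute positively. To handle this I would argue as in \cite{Chan:Ingalls:2005}: run a minimality argument on $S'$. Among all nonsingular models of $S$ dominating $S$ on which conclusion (i) holds, choose one, call it $S'$, with the smallest Picard number (equivalently, the fewest blow-ups over $S$); if $S'\neq S$, then no $\pi$-exceptional $(-1)$-curve can be contracted while preserving the simple normal crossings hypothesis without creating a non-klt or worse configuration — but more to the point, on such a minimal $S'$ one shows directly that the "last" exceptional curve $E$ over a singular (or non-snc) point has $(\K_{S'}+\mathbb{D}_{S'})\cdot E<0$ by a local computation at the point of $S$ being resolved, using positivity of $b$-discrepancy to control the coefficients $b'(E_i;S,\mathbb{D})$ and the snc combinatorics (a chain or tree of rational curves) to bound the cross terms; alternatively one invokes the Cone Theorem / that $\K_{S'}+\mathbb{D}_{S'}$ cannot be $\pi$-nef since $\pi$ is not an isomorphism and $b$-discrepancies are positive, forcing the existence of a $(\K_{S'}+\mathbb{D}_{S'})$-negative extremal curve contracted by $\pi$, which then automatically has negative self-intersection as it lies in a fibre of the birational morphism $\pi$. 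Packaging this last route: $-(\K_{S'}+\mathbb{D}_{S'})$ is $\pi$-nef would force $b'(E_i;S,\mathbb{D})\le 0$ for all $i$ by the negativity lemma, contradicting $b$-terminality; hence some curve $E$ in a $\pi$-fibre has $(\K_{S'}+\mathbb{D}_{S'})\cdot E<0$, and $E^2<0$ since $E$ is contracted by the birational morphism $\pi$ — giving (ii) and completing the proof.
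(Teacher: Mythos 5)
Your final route is essentially the paper's argument: take a log resolution $\pi\colon S'\to S$ (giving (i)), write $\K_{S'}+\mathbb{D}_{S'}\equiv\pi^*(\K_S+\mathbb{D}_S)+\sum_i b'(E_i;S,\mathbb{D})E_i$, and combine negative definiteness of the exceptional intersection matrix with $b'(E_i;S,\mathbb{D})>0$ (from $b$-terminality) to produce an exceptional $E$ with $(\K_{S'}+\mathbb{D}_{S'})\cdot E<0$, the condition $E^2<0$ being automatic for $\pi$-exceptional curves. The paper packages this more directly, pairing the displayed relation with $\sum_i b'(E_i;S,\mathbb{D})E_i$ itself to get $(\K_{S'}+\mathbb{D}_{S'})\cdot\sum_i b'(E_i;S,\mathbb{D})E_i=\bigl(\sum_i b'(E_i;S,\mathbb{D})E_i\bigr)^2<0$ and extracting a negative summand, so your detours through a chosen $(-1)$-curve and a minimality argument are unnecessary. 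One sign slip in your closing sentence: the hypothesis to contradict is that $\K_{S'}+\mathbb{D}_{S'}$ (not its negative) is $\pi$-nef, whence $\sum_i b'(E_i;S,\mathbb{D})E_i$ is $\pi$-nef and the negativity lemma forces $b'(E_i;S,\mathbb{D})\le 0$, contradicting $b$-terminality; as literally written, ``$-(\K_{S'}+\mathbb{D}_{S'})$ is $\pi$-nef'' would only yield $b'(E_i;S,\mathbb{D})\ge 0$ and no contradiction.
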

\begin{proof}
We argue similar to \cite[Proposition 3.14]{Chan:Ingalls:2005}.  
Suppose that:
$$\operatorname{b-discrep}(S,\mathbb{D}) > 0.$$
Let 
$$\pi \colon S' \rightarrow S$$ 
be a simple normal crossing resolution of singularities for $(S,\mathbb{D}_S)$.  Consider the equation:
$$
\K_{S'} + \mathbb{D}_{S'} \equiv \pi^*(\K_S + \mathbb{D}_S) + \sum a_i E_i
$$
and take the intersection product with the divisor $\sum a_i E_i$.  The intersection matrix 
$$||(E_i \cdot E_j)||$$ 
is negative definite.  Furthermore
$$
E_i \cdot \pi^* C = 0,
$$
for all irreducible curves $C$ on $S$;
it then follows that:
$$
(\K_{S'} + \mathbb{D}_{S'}) \cdot \sum a_i E_i = \left(\sum a_i E_i \right)^2 < 0.
$$
So, there exists 
$$E = E_j$$ 
with the property that:
$$
\left( \K_{S'} + \mathbb{D}_{S'} \right) \cdot E < 0
$$
and 
$$
E^2 < 0.
$$
We can then apply the contraction theorem (Theorem \ref{numerical:contraction:thm}).  In particular, we contract $E_j$ to get something smaller than $(S',\mathbb{D}_{S'})$. 
\end{proof}

 We now use the Cone Theorem for klt pairs, \cite[Theorem 3.5]{Kollar:Mori:1998}, to prove Theorem \ref{MMP:claim}.

\begin{proof}[Proof of Theorem \ref{MMP:claim}]
We argue as in \cite[Theorem 3.19]{Chan:Ingalls:2005}, and, in particular, consider consequence of \cite[Theorem 1.28]{Kollar:Mori:1998}. 

If $\K_S + \mathbb{D}_S$ is not nef, then existence of such an extremal curve $E$ is implied by the rationality theorem for klt pairs, \cite[Theorem 3.5]{Kollar:Mori:1998}.  

In case that 
$$E^2 < 0\text{,}$$ 
then we apply Theorem \ref{numerical:contraction:thm} combined with the uniqueness of divisorial contractions of extremal rays \cite[Theorem 3.7]{Kollar:Mori:1998}.  

In the other cases, we are able to show that 
$$\K_S \cdot E < 0$$ 
and so, as explained in the proof of \cite[Theorem 3.19]{Chan:Ingalls:2005}, we are in the situation of \cite[Theorem 1.28]{Kollar:Mori:1998}.
\end{proof}

\subsection*{Acknowledgements}  
This work benefitted, respectively, from visits to CIRGET (Montreal), NCTS (Taipei), Academia Sinica (Taipei), and BIRS (Banff),  during the Spring, Summer and Fall of 2019. I thank Colin Ingalls, Steven Lu, Carlo Gasbarri and colleagues for discussions and comments on related topics.  Finally, I thank the Natural Sciences and Engineering Research Council of Canada for their support via my grants DGECR-2021-00218 and RGPIN-2021-03821.

\providecommand{\bysame}{\leavevmode\hbox to3em{\hrulefill}\thinspace}
\providecommand{\MR}{\relax\ifhmode\unskip\space\fi MR }
\providecommand{\MRhref}[2]{%
  \href{http://www.ams.org/mathscinet-getitem?mr=#1}{#2}
}
\providecommand{\href}[2]{#2}

\end{document}